\newtheorem{theorem}{Theorem}
\newtheorem{lemma}[theorem]{Lemma}
\begin{document}

\title{An aspherical 5-manifold with perfect fundamental group}

\author{J.A. Hillman }
\address{School of Mathematics and Statistics, University of Sydney,
 \newline
 NSW 2006,      Australia}
 
\email{jonathan.hillman@sydney.edu.au}

\keywords{aspherical, perfect group,
Poincar\'e duality group, pro-$p$ completion} 

\subjclass[2000]{Primary 57M05, Secondary 20F99, 20J99}

\begin{abstract}
We construct aspherical closed orientable 5-manifolds 
with perfect fundamental group.
This completes earlier work on $PD_n$-groups with pro-$p$ completion a 
pro-$p$ Poincar\'e duality group of dimension $\leq{n-2}$.
We also consider the question of whether there are any examples with
``dimension drop" 1.
\end{abstract}

\maketitle
The paper \cite{HKL13} considers the phenomenon of dimension 
drop on pro-$p$ completion for orientable Poincar\'e duality groups. 
Products of aspherical homology spheres, copies of $S^1$ and
copies of the 3-manifold $M(K)$ obtained by 0-framed surgery 
on a nontrivial prime knot give examples of aspherical
closed orientable $n$-manifolds $N$ such that the pro-$p$ 
completion of $\pi_1(N)$ is a pro-$p$ Poincar\'e duality group 
of dimension $r$, 
for all $n\geq{r+2}$ and $r\geq0$, except when $n=5$ and $r=0$.
(This gap reflects the fact that 5 is not in the additive semigroup 
generated by 3 and 4,
dimensions in which aspherical homology spheres are known.)

We fill this gap in Theorem 2 below.
Modifying the construction of \cite{RT05} gives an aspherical closed 4-manifold 
with perfect fundamental group and non-trivial second cohomology.
The total space of a suitable $S^1$-bundle over this 4-manifold 
has the required properties.
We then apply Theorem 2 to refine the final result of \cite{HKL13}.
No such examples with dimension drop ${n-r}=1$ are known as yet.
The lowest dimension in which there might be such examples is $n=4$,
and we consider this case in the final section.
If they exist, products with copies of $S^1$ would give examples in all higher dimensions.

In this paper all manifolds and $PD_n$-groups shall be orientable.
If $G$ is a group then $G'$ and $G_{[n]}$ shall denote the commutator subgroup
and the $n$th term of the lower central series, respectively.
Let $G(\mathbb{Z})=\cap_{\lambda\in{Hom}(G,\mathbb{Z})}\mathrm{Ker}(\lambda)$.
Then $G/G(\mathbb{Z})$ is the maximal torsion-free abelian quotient of $G$.
If $p\geq2$ let $X^p(G)$ be the verbal subgroup generated by all $p$th powers in $G$.

\section{an aspherical 5-manifold with $\pi_1$ perfect}

Let $X$ be a compact $4$-manifold whose boundary components 
are diffeomorphic to the 3-torus $T^3$.
A {\it Dehn filling\/} of a component $Y$ of $\partial{X}$ is the adjunction of $T^2\times{D^2}$ 
to $X$ via a diffeomorphism $\partial(T^2\times{D^2})\cong{Y}$.
If the interior of $X$ has a complete hyperbolic metric
then ``most" systems of Dehn fillings on some or all of the boundary components 
give manifolds which admit metrics of non-positive curvature, 
and the fundamental groups of the cores of the solid tori $T^2\times{D^2}$ 
map injectively to the fundamental group of the filling of $X$,
by the Gromov-Thurston $2\pi$-Theorem.
(Here ``most" means ``excluding finitely many fillings of each boundary component".
See \cite{An06}.)

\begin{theorem}
There are aspherical closed $4$-manifolds $M$ with perfect fundamental group
and $H^2(M;\mathbb{Z})\not=0$.
\end{theorem}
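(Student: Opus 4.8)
The plan is to obtain $M$ as a Dehn filling of a cusped hyperbolic $4$-manifold, in the spirit of \cite{RT05}, and I would first reduce the problem to controlling an Euler characteristic. Suppose $M$ is a closed orientable aspherical $4$-manifold with $\pi_1(M)$ perfect, so $H_1(M;\mathbb{Z})=0$. Then $H^1(M;\mathbb{Z})=\mathrm{Hom}(H_1(M;\mathbb{Z}),\mathbb{Z})=0$, and Poincar\'e duality gives $H_3(M;\mathbb{Z})\cong H^1(M;\mathbb{Z})=0$. The universal coefficient theorem yields $H^2(M;\mathbb{Z})\cong\mathrm{Hom}(H_2(M;\mathbb{Z}),\mathbb{Z})\oplus\mathrm{Ext}(H_1(M;\mathbb{Z}),\mathbb{Z})=\mathrm{Hom}(H_2(M;\mathbb{Z}),\mathbb{Z})$, so $H^2(M;\mathbb{Z})\neq0$ if and only if $H_2(M;\mathbb{Z})$ has positive rank, i.e. if and only if $\chi(M)=2+\mathrm{rk}\,H_2(M;\mathbb{Z})\geq3$; the duality $H^2\cong H_2$ moreover forces $H_2(M;\mathbb{Z})$ to be free. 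Hence it suffices to construct a closed orientable aspherical $4$-manifold with perfect fundamental group and $\chi\geq3$.

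For the construction, let $X$ be a compact orientable $4$-manifold whose interior carries a complete finite-volume hyperbolic metric and all of whose boundary components $Y_1,\dots,Y_k$ are copies of $T^3$, and let $\phi\colon\bigoplus_iH_1(Y_i;\mathbb{Z})\to H_1(X;\mathbb{Z})$ be induced by inclusion. Dehn filling each $Y_i$ along a primitive class $\gamma_i\in H_1(Y_i;\mathbb{Z})$ produces a closed orientable $4$-manifold $M$. Each filling solid torus $T^2\times D^2$ deformation retracts onto $T^2$, so $M$ is homotopy equivalent to $X$ with a copy of $T^2$ glued to each $Y_i$ via the quotient $Y_i\to T^2$ by $\langle\gamma_i\rangle$; a Mayer--Vietoris computation then identifies $H_1(M;\mathbb{Z})$ with $H_1(X;\mathbb{Z})/\langle\phi(\gamma_1),\dots,\phi(\gamma_k)\rangle$. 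Also $\chi(M)=\chi(X)$, since $T^2\times D^2$ and $T^3$ have Euler characteristic $0$. By the Gromov--Thurston $2\pi$-Theorem recalled above (cf. \cite{An06}), for all but finitely many choices of $\gamma_i$ on each $Y_i$ the manifold $M$ admits a metric of non-positive curvature, and is therefore aspherical.

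It remains to choose $X$ with $\chi(X)\geq3$ and to choose the $\gamma_i$, avoiding on each $Y_i$ the finitely many slopes excluded by the $2\pi$-Theorem, so that $\phi(\gamma_1),\dots,\phi(\gamma_k)$ generate $H_1(X;\mathbb{Z})$; the resulting $M$ is then a closed orientable aspherical $4$-manifold with $\pi_1(M)$ perfect and $\chi(M)\geq3$, hence with $H^2(M;\mathbb{Z})\neq0$. This last step is the main obstacle, and is exactly where the modification of \cite{RT05} lies: the homology $4$-spheres of \cite{RT05} have $\chi=2$ and the minimal-volume cusped hyperbolic $4$-manifolds have $\chi=1$, so one needs a hyperbolic manifold of somewhat larger volume all of whose cusp cross-sections are $3$-tori. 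I would produce one either by passing to a suitable finite cover of a manifold used in \cite{RT05} (a cover of $T^3$ is again $T^3$, and $\chi$ multiplies by the covering degree) or by selecting another manifold from the relevant census, and then check that $H_1(X;\mathbb{Z})$ is still generated by peripheral classes efficiently enough that a single admissible primitive slope on each boundary torus suffices to span it. Verifying this homological bookkeeping — in particular that enough admissible slopes survive the exclusion of the finitely many bad ones — is the delicate point.
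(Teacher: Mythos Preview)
Your reduction to finding a closed orientable aspherical $4$-manifold with perfect $\pi_1$ and $\chi\geq3$ is correct, as is the Dehn-filling framework and the identification $H_1(M;\mathbb{Z})\cong H_1(X;\mathbb{Z})/\langle\phi(\gamma_1),\dots,\phi(\gamma_k)\rangle$. The gap is exactly where you yourself place it: you do not actually exhibit a cusped hyperbolic $4$-manifold $X$ with $\chi(X)\geq3$, toroidal cusps, and admissible filling slopes whose images span $H_1(X;\mathbb{Z})$. Passing to a finite cover of the Ratcliffe--Tschantz manifold multiplies $\chi$, but it also enlarges $H_1$ (and changes the number of cusps), and there is no a~priori reason one primitive slope per cusp---chosen outside the finitely many excluded by the $2\pi$-theorem---still generates the larger $H_1(\tilde X;\mathbb{Z})$. ``Selecting another manifold from the relevant census'' is likewise not an argument until a specific manifold is named and the homology checked. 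As written, this is a plausible strategy rather than a proof.

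The paper sidesteps this search entirely with a doubling trick. It fills only four of the five cusps of the same $\chi=2$ manifold $\overline{M}$ from \cite{RT05} to obtain an aspherical $N$ with $\chi(N)=2$, $H_1(N;\mathbb{Z})\cong\mathbb{Z}$, and $\pi_1$-injective boundary $\partial N\cong T^3$. Choosing a basis $\{x,y,z\}$ of $H_1(T^3;\mathbb{Z})$ with $x$ mapping to a generator of $H_1(N;\mathbb{Z})$ and $y,z$ mapping to $0$, one then glues two copies of $N$ along their boundaries via the automorphism $\phi$ of $T^3$ that swaps $x$ and $y$. The result $P=N\cup_\phi N$ is aspherical (both pieces aspherical with $\pi_1$-injective aspherical boundary), has $\chi(P)=2\chi(N)=4$, and Mayer--Vietoris gives $H_1(P;\mathbb{Z})=0$ since the combined map $H_1(T^3)\to H_1(N)\oplus H_1(N)$ is $(a,b,c)\mapsto(a,b)$. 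Hence $\pi_1(P)$ is perfect and $H^2(P;\mathbb{Z})\cong\mathbb{Z}^2$. The twist $\phi$ manufactures the extra relation that kills $H_1$ without requiring any new hyperbolic input---this is the idea your approach is missing.
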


\begin{proof}
Let $M=S^4\setminus5T^2$ be the complete hyperbolic 4-manifold with 
finite volume and five cusps considered in \cite{Iv04} and \cite{RT05}, 
and let $\overline{M}$ be a compact core, with interior diffeomorphic to $M$.
Then $H_1(\overline{M};\mathbb{Z})\cong\mathbb{Z}^5$, $\chi(\overline{M})=2$ 
and the boundary components of $\overline{M}$ are all diffeomorphic to the 3-torus $T^3$.
There are  infinitely many quintuples of Dehn fillings of the components of $\partial\overline{M}$ 
such that the resulting closed 4-manifold is an aspherical homology 4-sphere \cite{RT05}.
Let $\widehat{M}$ be one such closed 4-manifold, and let $N\subset\widehat{M}$ 
be the compact 4-manifold obtained by leaving one boundary component of $X$ unfilled.
We may assume that the interior of $N$ has a non-positively curved metric,
and so $N$ is aspherical.
The Mayer-Vietoris sequence for $M=N\cup{T^2\times{D^2}}$  gives an isomorphism
\[
H_1(T^3;\mathbb{Z})\cong{H_1(N;\mathbb{Z})}\oplus{H_1(T^2;\mathbb{Z})}.
\]
Let $\{x,y,z\}$ be a basis for $H_1(T^3;\mathbb{Z})$ compatible with this splitting.
Thus $x$ represents a generator of $H_1(N;\mathbb{Z})$ and maps to 0 in the second summand,
while $\{y,z\}$ has image 0 in $H_1(N;\mathbb{Z})$ but generates the second summand.
Since the subgroup generated by $\{y,z\}$ maps injectively to $\pi_1(\widehat{M)}$ \cite{An06},
the inclusion of $\partial{N}$  into $N$ is $\pi_1$-injective.
Let $\phi$ be the automorphism of $\partial{N}=T^3$ which swaps the generators $x$ and $y$,
and let $P=N\cup_\phi{N}$.
Then $P$ is aspherical and $\chi(P)=2\chi(N)=4$.
A Mayer-Vietoris calculation gives $H_1(P;\mathbb{Z})=0$, 
and so $\pi=\pi_1(P)$ is perfect and $H^2(P;\mathbb{Z})\cong\mathbb{Z}^2$.
\end{proof}

Are there compact complex surfaces with perfect fundamental group
and which are uniformized by the unit ball or the bidisc?
(Such manifolds are aspherical, and have middle cohomology of rank $>0$,
since they are K\"ahler.)

\begin{theorem}
There are aspherical closed $5$-manifolds with perfect fundamental group.
\end{theorem}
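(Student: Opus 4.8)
The plan is to build the manifold as the total space of a circle bundle over the $4$-manifold supplied by Theorem 1, exactly as indicated in the introduction. Let $P$ be an aspherical closed orientable $4$-manifold with $\pi_1(P)$ perfect and $H^2(P;\mathbb{Z})\cong\mathbb{Z}^2$, as produced in Theorem 1. Since $H_1(P;\mathbb{Z})=0$, the universal coefficient theorem gives $H^2(P;\mathbb{Z})\cong\mathrm{Hom}(H_2(P;\mathbb{Z}),\mathbb{Z})$, and Poincar\'e duality gives $H_2(P;\mathbb{Z})\cong H^2(P;\mathbb{Z})\cong\mathbb{Z}^2$, which is torsion-free. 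I would then fix an indivisible class $e\in H^2(P;\mathbb{Z})$ (equivalently, one that is part of a basis) and let $p\colon E\to P$ be the $S^1$-bundle with Euler class $e$. Then $E$ is a closed orientable $5$-manifold, being an oriented circle bundle over a closed oriented $4$-manifold.

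Next I would check that $E$ is aspherical. The map $p$ is a fibration with fibre $S^1$, so its long exact sequence of homotopy groups, together with $\pi_i(S^1)=0$ for $i\geq2$ and $\pi_i(P)=0$ for $i\geq2$, forces $\pi_i(E)=0$ for all $i\geq2$; hence $E$ is aspherical. Since $\pi_2(P)=0$ the same sequence degenerates to a central extension
\[
1\to\mathbb{Z}\to\pi_1(E)\to\pi_1(P)\to1
\]
classified by $e$, although only the homological consequence of this is needed below.

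It remains to show that $\pi=\pi_1(E)$ is perfect, i.e.\ that $H_1(E;\mathbb{Z})=0$. I would read this off the Gysin sequence of the bundle, whose relevant segment is
\[
H_2(P;\mathbb{Z})\xrightarrow{\,\cap e\,}H_0(P;\mathbb{Z})\to H_1(E;\mathbb{Z})\to H_1(P;\mathbb{Z})\to0 .
\]
As $H_1(P;\mathbb{Z})=0$, we get $H_1(E;\mathbb{Z})\cong\mathrm{coker}\bigl(\cap e\colon H_2(P;\mathbb{Z})\to H_0(P;\mathbb{Z})\bigr)$. Identifying this map with the Kronecker pairing $\langle e,-\rangle\colon H_2(P;\mathbb{Z})\to\mathbb{Z}$ and using that $e$ is indivisible in $H^2(P;\mathbb{Z})\cong\mathrm{Hom}(H_2(P;\mathbb{Z}),\mathbb{Z})$, this pairing is onto $\mathbb{Z}$, so the cokernel vanishes and $H_1(E;\mathbb{Z})=0$. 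Hence $E$ is an aspherical closed orientable $5$-manifold with perfect fundamental group. One can argue equivalently with the five-term exact sequence of the central extension, where the transgression $H_2(\pi_1(P);\mathbb{Z})\to\mathbb{Z}$ is again cap product with $e$.

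The construction is elementary; the real content sits in Theorem 1, which is precisely what guarantees that $H^2(P;\mathbb{Z})$ is nonzero and in fact contains an indivisible class. Without such a class there would be no way to kill the $\mathbb{Z}$ contributed by the fibre — for instance the product $P\times S^1$ has $H_1\cong\mathbb{Z}$ — so the only point requiring care in this argument is the choice of $e$ and the resulting surjectivity of the Gysin (equivalently, transgression) map, which is immediate once $H^2(P;\mathbb{Z})$ is known.
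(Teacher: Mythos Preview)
Your proof is correct and follows essentially the same approach as the paper: construct $E$ as the $S^1$-bundle over $P$ with indivisible Euler class $e$, obtain asphericity from the homotopy exact sequence of the fibration, and read off perfectness of $\pi_1(E)$ from the Gysin sequence. The only cosmetic difference is that the paper uses the cohomology Gysin sequence with $\mathbb{F}_p$ coefficients for every prime $p$ (showing $H^1(E;\mathbb{F}_p)=0$ for all $p$) rather than your integral homology version, but the content is identical.
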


\begin{proof}
Let $P$ be an aspherical closed 4-manifold with $\pi_1(P)$ perfect and $H^2(P;\mathbb{Z})\not=0$, as in Theorem 1.
Let $e$ generate a direct summand of $H^2(\pi;\mathbb{Z})=H^2(P;\mathbb{Z})$, 
and let $E$ be the total space of the $S^1$-bundle over $P$ with Euler class $e$.
Then $E$ is an aspherical 5-manifold, 
and $G=\pi_1(E)$ is the central extension of $\pi_1(P)$ by $\mathbb{Z}$ 
corresponding to $e\in {H^2(\pi_1(P);\mathbb{Z})}$. 
The Gysin sequence for the bundle (with coefficients in $\mathbb{F}_p$) has a subsequence
\[
0\to{H^1}(E;\mathbb{F}_p)\to{H^0(P;\mathbb{F}_p)}\to{H^2(P;\mathbb{F}_p)}\to
{H^2(E;\mathbb{F}_p)}\to\dots
\] 
in which the {\it mod}-$p$ reduction of $e$ generates the image of $H^0(P;\mathbb{F}_p)$.
Since $e$ is indivisible this image is nonzero, for all primes $p$.
Therefore $H^1(G;\mathbb{F}_p)=H^1(E;\mathbb{F}_p)=0$, for all $p$, and so $G$ is perfect.
\end{proof}

(From the algebraic point of view, 
$G$ is a quotient of the universal central extension of $\pi_1(P)$,
which is perfect.)

Is there an aspherical 5-dimensional homology sphere?
If there is an aspherical 4-manifold $X$ with the integral homology of 
$\mathbb{CP}^2$ then the total space of the $S^1$-bundle with Euler class
a generator of $H^2(X;\mathbb{Z})$ would be such an example.
The fake projective planes of \cite{PY07} are asperical and
have the rational homology of $\mathbb{CP}^2$, 
but they have nonzero first homology.
The total spaces of the $S^1$-bundles over such fake projective planes
and with Euler class of infinite order are aspherical
rational homology 5-spheres.

\section{pro-$p$ completions and dimension drop $\geq2$}

The simplest construction of examples of orientable $PD_n$ groups
with pro-$p$ completion a pro-$p$ Poincar\'e duality group 
of lower dimension uses the fact that finite $p$-groups are nilpotent.
Thus if $G$ is a group with $G'=[G,G']$ the abelianization homomorphism 
induces isomorphisms on pro-$p$ completions, for all primes $p$.
Hence products of perfect groups with free abelian groups have 
pro-$p$ completion a free abelian pro-$p$ group.
In this section we shall use Theorem 2 to remove a minor constraint 
on the final result of \cite{HKL13}, which excluded a family of such examples.

\begin{theorem}
For each $r\geq0$ and  $n\geq\max\{r+2,3\}$ there is an aspherical closed $n$-manifold
with fundamental group $\pi$ such that $\pi/\pi'\cong\mathbb{Z}^r$ and $\pi'=\pi''$.
\end{theorem}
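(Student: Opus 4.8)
The plan is to build the required manifolds as finite products of three kinds of aspherical closed building blocks: aspherical homology spheres (whose fundamental groups are perfect, contributing $0$ to the first Betti number), circles (contributing $1$), and the $3$-manifold $M(K)$ for a suitably chosen knot $K$ (also contributing $1$). For a product $\prod_iM_i$ the fundamental group $\pi$ is $\prod_i\pi_i$, so $\pi/\pi'\cong\prod_i\pi_i/\pi_i'$ and $\pi''=\prod_i\pi_i''$; since a circle has $\pi_i=\mathbb{Z}$ and an aspherical homology sphere has $\pi_i$ perfect (so $\pi_i'=\pi_i''=\pi_i$), the whole problem reduces to choosing $K$ so that $\pi_1(M(K))$ has infinite cyclic abelianization and perfect commutator subgroup, together with a counting argument matching up dimensions and first Betti numbers.

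First I would treat the case $r=0$: here one needs, for every $n\geq3$, an aspherical closed $n$-manifold with perfect fundamental group. Products of the aspherical homology $3$- and $4$-spheres already mentioned realize every $n$ in the numerical semigroup generated by $3$ and $4$, that is, every $n\geq3$ except $n=5$; and $n=5$ is precisely Theorem 2. For $r\geq1$ and $n\geq r+3$ I would take the product of $r$ copies of $S^1$ with an aspherical closed $(n-r)$-manifold with perfect fundamental group, which exists by the case just handled since $n-r\geq3$. The remaining case is $n=r+2$ with $r\geq1$: here I would take the product of $r-1$ copies of $S^1$ with $M(K)$, choosing $K$ to be a nontrivial prime knot with trivial Alexander polynomial, such as the untwisted Whitehead double of the trefoil. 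Then $M(K)$ is irreducible by Gabai's theorem, hence aspherical; $H_1(M(K);\mathbb{Z})\cong\mathbb{Z}$; and, since $\Delta_K=1$ forces the Alexander module $H_1(\widetilde{M(K)};\mathbb{Z})\cong\pi_1(M(K))'/\pi_1(M(K))''$ to vanish, the commutator subgroup of $\pi_1(M(K))$ is perfect. In all three cases the fundamental group $\pi$ then satisfies $\pi/\pi'\cong\mathbb{Z}^r$ and $\pi'=\pi''$.

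The main obstacle, now that Theorem 2 is available, is really just the edge case $n=r+2$: there is no aspherical closed surface with perfect fundamental group, so one cannot simply take the product of a $2$-dimensional block with tori, and the knot-surgery manifolds $M(K)$ have to be used instead. The one genuinely non-formal point is then the verification that, for an Alexander-polynomial-one prime knot $K$, the closed $3$-manifold $M(K)$ is aspherical with perfect commutator subgroup; granting this, everything else is bookkeeping with the additivity of dimension, first Betti number, and the second commutator subgroup under direct products.
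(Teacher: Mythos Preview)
Your proposal is correct and follows essentially the same approach as the paper: build everything from products of aspherical manifolds with perfect fundamental group, copies of $S^1$, and $M(K)$ for a nontrivial prime knot with $\Delta_K=1$, using Theorem~2 to plug the gap at $n=5$. The only cosmetic differences are that the paper names the $4$-manifold $P$ of Theorem~1 (rather than an aspherical homology $4$-sphere) among its perfect building blocks and cites the Kinoshita--Terasaka knot $11_{n42}$ instead of a Whitehead double; the logical structure, including the appeal to Gabai for asphericity of $M(K)$ and to $\Delta_K=1$ for perfection of $\mu'$, is identical.
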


\begin{proof}
Let $\Sigma$ be an aspherical homology 3-sphere 
(such as the Brieskorn 3-manifold $\Sigma(2,3,7)$)
and let $P$ and $E$ be as in Theorems 1 and 2. 
Taking suitable products of copies of $\Sigma$, $P$, $E$  and $S^1$
with each other realizes all the possibilities with $n\geq{r+3}$,
for all $r\geq0$.

Let $M=M(K)$ be the 3-manifold obtained by $0$-framed surgery 
on a nontrivial prime knot $K$ with Alexander polynomial $\Delta(K)=1$ 
(such as the Kinoshita-Terasaka  knot $11_{n42}$).
Then $M$ is aspherical, since $K$ is nontrivial \cite{Ga87}, and
if $\mu=\pi_1(M)$ then $\mu/\mu'\cong\mathbb{Z}$ and $\mu'$ is perfect,
since $\Delta(K)=1$.
Hence products $M\times(S^1)^{r-1}$ give examples with $n=r+2$, for all $r\geq1$.
\end{proof}

In particular, the dimension hypotheses in Theorem 6.3 
of \cite{HKL13} may be simplified,
so that it now asserts:

{\it
Let  $m\geq3$ and $r\geq0$.
Then there is an aspherical closed $(m+r)$-manifold $M$ with fundamental group 
$G=K\times\mathbb{Z}^r$, where $K=K'$.
If $m\not=4$  we may assume that $\chi(M)=0$, and if $r>0$ this must be so.}\\
This is best possible, as no $PD_1$- or $PD_2$-group is perfect,
and no perfect $PD_4$-group $H$ has $\chi(H)=0$.

\section{dimension drop $\leq1$?}

Can we extend Theorem 3 to give examples of $PD_n$-groups $\pi$ with
$\pi/\pi'\cong\mathbb{Z}^{n-1}$ and $\pi_{[2]}=\pi_{[3]}$, 
realizing dimension drop 1 on all pro-$p$ completions?
In this section we shall weaken some of these conditions,
by considering maps to $PD_{n-1}$ groups other than $\mathbb{Z}^{n-1}$ 
and requiring only that pro-$p$ completion be well-behaved 
for {\it some\/} primes $p$.

There are clearly no such examples with $n=2$,
since the only pro-$p$ Poincar\'e duality group of dimension 1 is $\widehat{\mathbb{Z}}_p$.
The next lemma rules out a direct analogue of Theorem 3 with $n=3$ and $r=n-1$.

\begin{lemma}
Let $\pi$ be a $PD_3$-group such that $\pi/\pi'\cong\mathbb{Z}^r$ and $\pi'=\pi''$. 
Then $r=0,1$ or $3$.
\end{lemma}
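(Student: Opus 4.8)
The plan is to exploit the constraint that a $PD_3$-group which surjects onto $\mathbb{Z}^r$ must, after passing to the kernel, look like the fundamental group of a fibration. First I would dispose of the easy cases: if $r=0$ there is nothing to prove, and examples with $r=0$ (aspherical homology $3$-spheres) and $r=3$ (the $3$-torus) plainly exist, so the content is that $r=2$ cannot occur. So assume $\pi/\pi'\cong\mathbb{Z}^r$ with $\pi'=\pi''$, and suppose $r\geq 2$; the goal is to force $r=3$. The first real step is to use the hypothesis $\pi'=\pi''$ (superperfection of the commutator subgroup) together with $\pi/\pi'$ free abelian: this means $\pi$ has the same $\mathbb{F}_p$-homology as $\mathbb{Z}^r$ in low degrees, in particular $H_1(\pi;\mathbb{F}_p)\cong\mathbb{F}_p^r$ for all $p$, and more importantly $H_2(\pi;\mathbb{Z})$ should be computed via the LHS spectral sequence for $1\to\pi'\to\pi\to\mathbb{Z}^r\to1$. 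Since $\pi'$ is perfect, $H_1(\pi';\mathbb{Z})=0$, so the spectral sequence collapses enough to give $H_2(\pi;\mathbb{Z})\cong H_2(\mathbb{Z}^r;\mathbb{Z})\cong\mathbb{Z}^{\binom{r}{2}}$ plus a contribution from $H_0(\mathbb{Z}^r;H_2(\pi';\mathbb{Z}))$.

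The second step is Poincar\'e duality: for an orientable $PD_3$-group, $H_2(\pi;\mathbb{Z})\cong H^1(\pi;\mathbb{Z})\cong\mathbb{Z}^r$ (the latter because $H^1(\pi;\mathbb{Z})\cong\mathrm{Hom}(\pi/\pi',\mathbb{Z})$). Combined with the previous paragraph this gives $\binom{r}{2}\leq r$ as a first crude bound, already forcing $r\leq 3$; but to pin down that $r=2$ is impossible I would look more carefully. When $r=2$, $H_2(\pi;\mathbb{Z})\cong\mathbb{Z}^2$ and $H_2(\mathbb{Z}^2;\mathbb{Z})\cong\mathbb{Z}$, so the spectral sequence forces $H_2(\pi';\mathbb{Z})_{\mathbb{Z}^2}\cong\mathbb{Z}$ (coinvariants), and I would chase the $d_2$ and $d_3$ differentials, and the cup product structure on $H^*(\pi;\mathbb{Z})\cong H^*(\mathbb{Z}^2;\mathbb{Z})$-module structure, to derive a contradiction with $3$-dimensional duality — specifically, the cup product $H^1\times H^1\to H^2$ for $\mathbb{Z}^2$ is nondegenerate of rank $1$, but the cup product $H^1(\pi;\mathbb{Z})\times H^2(\pi;\mathbb{Z})\to H^3(\pi;\mathbb{Z})\cong\mathbb{Z}$ must be a nonsingular pairing of two rank-$2$ lattices, which is incompatible once one traces where the classes come from.

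An alternative, probably cleaner, route for the $r=2$ case: a $PD_3$-group surjecting onto $\mathbb{Z}$ with finitely generated kernel is, by Stallings' fibration theorem (its group-theoretic version, valid for $PD_3$-groups), an extension $1\to K\to\pi\to\mathbb{Z}\to 1$ with $K$ a $PD_2$-group — i.e., $K$ is a surface group. If $\pi/\pi'\cong\mathbb{Z}^2$ then $\pi'$ has $H_1$ of rank at least $1$ over $\mathbb{Q}$ when $\pi'\subseteq K$, or one argues that the induced map $K\to\mathbb{Z}$ (the second coordinate, restricted) is nontrivial; either way $K$ surjects onto $\mathbb{Z}$, so $H_1(K;\mathbb{Z})\neq 0$, hence $H_1(\pi';\mathbb{Z})\neq 0$ since $\pi'$ has finite index in... no — one needs $\pi'$ itself. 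Here the key point is that $\pi'=\pi''$ forces $H_1(\pi';\mathbb{Z})=0$, but $\pi'$ is a subgroup of the surface group $K$ of infinite index (as $\pi/\pi'\cong\mathbb{Z}^2$ maps onto $K/\pi'\cap K$ which is infinite), and an infinite-index subgroup of a surface group is free, hence cannot be perfect unless trivial; but $\pi'$ trivial gives $\pi\cong\mathbb{Z}^2$, not a $PD_3$-group — contradiction. That finishes $r=2$, and the same freeness-of-infinite-index-subgroups argument, applied when $r\geq 2$ to any primitive $\mathbb{Z}$-quotient, shows the kernel surface group cannot have perfect infinite-index subgroup $\pi'$ unless $r=3$ (where $\pi'$ is trivial). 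The main obstacle I anticipate is justifying the Stallings-type fibration statement for $PD_3$-groups that are not a priori known to be $3$-manifold groups — but this is standard (finitely generated normal subgroups of $PD_3$-groups with quotient $\mathbb{Z}$ are $PD_2$), and the rest is then a short homological argument.
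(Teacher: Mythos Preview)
Your ``cleaner'' fibration route has a real gap: the $PD_3$ analogue of Stallings' theorem you invoke requires the kernel $K$ of $\pi\to\mathbb{Z}$ to be finitely generated (or at least $FP_2$) before you can conclude it is a $PD_2$-group, and nothing in the hypotheses gives this. Perfectness of $\pi'$ says nothing about finite generation of $K\supset\pi'$, and kernels of maps from $PD_3$-groups to $\mathbb{Z}$ are typically \emph{not} finitely generated. Without $K$ a surface group, the ``infinite-index subgroups are free'' step collapses.

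Your first route, by contrast, can be made to work, though you leave the decisive step as hand-waving. Concretely, over $\mathbb{Q}$: since $\pi'=\pi''$ we have $\pi_{[2]}=\pi_{[3]}$, so the five-term sequence makes $H_2(\pi;\mathbb{Q})\to H_2(\mathbb{Z}^r;\mathbb{Q})$ surjective; with $\beta_2(\pi)=\beta_1(\pi)=r$ this already gives $\binom{r}{2}\le r$, hence $r\le 3$. For $r=2$, let $a,b$ generate $H^1(\pi;\mathbb{Q})$; then $a\cup(a\cup b)=0=b\cup(a\cup b)$, so nonsingularity of the Poincar\'e pairing $H^1\times H^2\to H^3$ forces $a\cup b=0$. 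But dually the surjection above makes $H^2(\mathbb{Z}^2;\mathbb{Q})\to H^2(\pi;\mathbb{Q})$ injective, so $a\cup b\ne 0$ --- contradiction. That is what ``incompatible once one traces where the classes come from'' should say.

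The paper's proof is entirely different and arguably slicker. It works with coefficients in $\Lambda=\mathbb{Z}[\pi/\pi']$: tensoring a length-$3$ projective $\mathbb{Z}[\pi]$-resolution of $\mathbb{Z}$ down to $\Lambda$ gives a finite free $\Lambda$-complex $D_*$ with $H_1(D_*)=H_1(\pi';\mathbb{Z})=0$. This exactness in degree $1$ lets one identify $H^0$ and $H^1$ of the dual complex with $\mathrm{Hom}_\Lambda(\mathbb{Z},\Lambda)$ and $\mathrm{Ext}^1_\Lambda(\mathbb{Z},\Lambda)$, both of which vanish for $r\ge 2$; Poincar\'e duality then forces $H_2(D_*)=H_3(D_*)=0$, so $D_*$ is a genuine finite free $\Lambda$-resolution of $\mathbb{Z}$ of length $3$. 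Since $H^3$ of its dual is $H_0(D_*)=\mathbb{Z}$, the group $\mathbb{Z}^r$ must satisfy $\mathrm{Ext}^3_\Lambda(\mathbb{Z},\Lambda)\cong\mathbb{Z}$, i.e.\ $r=3$. This argument handles all $r\ge 2$ in one stroke, needs no cup-product analysis, and avoids any fibration or finite-generation issues.
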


\begin{proof}
We may assume that $r>1$. 
The augmentation $\pi$-module $\mathbb{Z}$ has  a finitely generated 
projective resolution $C_*$ of length 3.
Let $\Lambda=\mathbb{Z}[\pi/\pi']$,
and let $D_*=\Lambda\otimes_\pi{C_*}$ and $D^*=Hom_\Lambda(D_{3-*},\Lambda)$.
Then $H^p(D^*)\cong{H_{3-p}(D_*)}$, by Poincar\'e duality for $\pi$.
We have $H_1(D_*)=0$, since $\pi'$ is perfect, 
$H_2(D_*)\cong{H^1(D^*)}\cong {Ext}^1_\Lambda(\mathbb{Z},\Lambda)=0$,
since $r>1$, and 
$H_3(D_*)\cong{H^0(D^*)}\cong {Hom}_\Lambda(\mathbb{Z},\Lambda)=0$,
since $r>0$.
Therefore $D_*$ is a finitely generated free resolution of the augmentation
$\pi/\pi'$-module.
Since $D_*$ has length 3 and $H^3(D^*)\cong{H_0(D_*)}=\mathbb{Z}$, 
we must have $r=3$.
\end{proof}

The values $r=0,1$ and 3 may be realized by the fundamental groups
of $\Sigma(2,3,7)$), $M(11_{n42})$ and the 3-torus $(S^1)^3$, respectively.

Pro-$p$ Poincar\'e duality groups of dimension 2 are also well understood.
This class (of so-called {\it Demu\v skin groups}) includes all pro-$p$ completions of $PD_2$-groups, but is somewhat larger.

\begin{theorem}
The pro-$p$ completion of a $PD_3$-group $G$ is not a Demu\v skin group,
for any odd prime $p$.
\end{theorem}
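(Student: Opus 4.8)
The plan is to derive a contradiction from the hypothesis that $\widehat{G}_p$ is a Demushkin group, by comparing what this forces on the $\mathbb{F}_p$-cohomology of $G$ with what Poincaré duality permits for a $PD_3$-group. Recall that a Demushkin pro-$p$ group $D$ is a pro-$p$ Poincaré duality group of dimension $2$: it satisfies $\dim_{\mathbb{F}_p}H^1(D;\mathbb{F}_p)=d$ for some $d$ (possibly infinite), $\dim_{\mathbb{F}_p}H^2(D;\mathbb{F}_p)=1$, and the cup product $H^1(D;\mathbb{F}_p)\times H^1(D;\mathbb{F}_p)\to H^2(D;\mathbb{F}_p)\cong\mathbb{F}_p$ is a non-degenerate pairing. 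The first step is to translate these into statements about $G$ itself: since $H^i(\widehat{G}_p;\mathbb{F}_p)\cong H^i(G;\mathbb{F}_p)$ for $i=1$ (always) and we must check the comparison in degree $2$, I would invoke the standard fact that the natural map $H^2(\widehat{G}_p;\mathbb{F}_p)\to H^2(G;\mathbb{F}_p)$ is injective and compatible with cup products. Thus the cup-product form $H^1(G;\mathbb{F}_p)\times H^1(G;\mathbb{F}_p)\to H^2(G;\mathbb{F}_p)$ has a non-degenerate "sub-form" landing in a one-dimensional subspace, forcing in particular $d=\dim_{\mathbb{F}_p}H^1(G;\mathbb{F}_p)$ to be finite and even (a Demushkin group of finite rank has even rank when $p$ is odd — this is exactly where the odd-$p$ hypothesis enters, via the classification of Demushkin groups).

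Next I would bring in Poincaré duality for the $PD_3$-group $G$ with $\mathbb{F}_p$-coefficients: $H^i(G;\mathbb{F}_p)\cong H_{3-i}(G;\mathbb{F}_p)\cong H^{3-i}(G;\mathbb{F}_p)^\vee$ (using that $\mathbb{F}_p$ is a field and $G$ is orientable). Hence $\dim H^0=\dim H^3=1$, and $\dim H^1=\dim H^2=d$. The fundamental 3-dimensional cup product $H^1(G;\mathbb{F}_p)\times H^2(G;\mathbb{F}_p)\to H^3(G;\mathbb{F}_p)\cong\mathbb{F}_p$ is non-degenerate. The key tension is between this degree-$(1,2)\to 3$ duality and the degree-$(1,1)\to 2$ non-degenerate form coming from the Demushkin hypothesis: the latter says every nonzero $\alpha\in H^1$ admits $\beta\in H^1$ with $0\neq\alpha\cup\beta\in H^2$, i.e. the image of $\cup\colon \Lambda^2 H^1(G;\mathbb{F}_p)\to H^2(G;\mathbb{F}_p)$ is all of a fixed line, yet $H^2(G;\mathbb{F}_p)$ has dimension $d$. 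If $d>1$ this already shows the cup product $H^1\times H^1\to H^2$ is far from surjective, which is not yet a contradiction — so the real work is to exploit the interaction of the two pairings.

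The crux, which I expect to be the main obstacle, is the following: pick the line $L=\mathrm{im}(\cup\colon \Lambda^2H^1\to H^2)$ guaranteed by the Demushkin structure, choose a generator $c\in L$, and consider the linear functional on $H^1(G;\mathbb{F}_p)$ given by $\alpha\mapsto$ (coefficient of $\alpha\cup c$ in $H^3(G;\mathbb{F}_p)\cong\mathbb{F}_p$). By the degree-$3$ duality this functional is either zero or has a $1$-dimensional kernel. Meanwhile, for any $\alpha,\beta\in H^1$ we have $\alpha\cup(\beta\cup\gamma)=(\alpha\cup\beta)\cup\gamma$, and $\alpha\cup\beta\in L$, so $\alpha\cup\beta\cup\gamma$ depends only on the image of $\alpha\cup\beta$ in $L\cong\mathbb{F}_p$ scaled against the functional evaluated at $\gamma$. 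Chasing this associativity identity together with the non-degeneracy of the $(1,1)$-form should force $d\le 2$; combined with $d$ even (odd $p$) this leaves only $d=0$ or $d=2$. The case $d=0$ is impossible since then $G$ would be its own Demushkin completion yet have $H^1=0$, contradicting $\dim H^2\ge 1$ from the Demushkin axioms (or directly: a perfect $PD_3$-group has $\widehat{G}_p$ with $H^1=0$, not Demushkin). The case $d=2$ would make $\widehat{G}_p$ a Demushkin group of rank $2$; here I would use the precise classification — for $p$ odd a rank-$2$ Demushkin group is the pro-$p$ completion of the Klein-bottle or torus surface group, in particular it is $2$-dimensional as a duality group and its $H^2$ is $1$-dimensional, contradicting $\dim H^2(G;\mathbb{F}_p)=d=2$ forced by $PD_3$-duality. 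Either way we reach a contradiction, completing the proof. The delicate point throughout is justifying the cup-product-compatible comparison $H^2(\widehat{G}_p;\mathbb{F}_p)\hookrightarrow H^2(G;\mathbb{F}_p)$ and extracting the numerical bound on $d$ from the two pairings; everything else is bookkeeping with Poincaré duality and the Demushkin classification.
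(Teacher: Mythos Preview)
Your overall strategy coincides with the paper's: transfer the Demu\v skin cup-product structure from $\widehat{G}_p$ to $G$, then derive a contradiction from three-dimensional Poincar\'e duality. The paper effects the transfer by identifying $\ker(\wedge^2H^1\to H^2)$ with a subquotient of the $p$-lower central series (which is the same for $G$ and for $\widehat{G}_p$), whereas you use the injection $H^2(\widehat{G}_p;\mathbb{F}_p)\hookrightarrow H^2(G;\mathbb{F}_p)$ together with naturality of cup product. Both routes yield the same conclusion: the image of $\wedge^2H^1(G;\mathbb{F}_p)$ in $H^2(G;\mathbb{F}_p)$ is a line $L=\langle c\rangle$.

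The problem is in your execution of the ``crux'' and in the subsequent case $d=2$. Carried out correctly, the associativity argument already gives a contradiction for \emph{every} $d\geq2$, so no case split is needed. Choose $\gamma_0\in H^1$ with $c\cup\gamma_0\neq0$ (this exists because the $PD_3$ pairing $H^2\times H^1\to H^3$ is non-singular), and write $\alpha\cup\beta=\mu(\alpha,\beta)\,c$. Then
\[
\mu(\alpha,\beta)\,(c\cup\gamma_0)=(\alpha\cup\beta)\cup\gamma_0
=\alpha\cup(\beta\cup\gamma_0)=\mu(\beta,\gamma_0)\,(c\cup\alpha).
\]
Setting $\beta=\gamma_0$ and using $\mu(\gamma_0,\gamma_0)=0$ (here is where $p$ odd enters) gives $\mu(\alpha,\gamma_0)=0$ for all $\alpha$; feeding this back into the displayed identity forces $\mu\equiv0$, contradicting $\dim L=1$. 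This is exactly what the paper's terse phrase ``impossible, by the non-singularity of Poincar\'e duality'' is encoding. Your stated conclusion ``$d\leq2$'' undersells what the identity actually proves.

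Your separate treatment of the case $d=2$ contains a genuine gap: you argue that $\dim H^2(\widehat{G}_p;\mathbb{F}_p)=1$ contradicts $\dim H^2(G;\mathbb{F}_p)=2$, but you have only established an \emph{injection} $H^2(\widehat{G}_p;\mathbb{F}_p)\hookrightarrow H^2(G;\mathbb{F}_p)$, not an isomorphism, so $1\leq 2$ is no contradiction at all. Likewise the parity constraint ($d$ even for odd $p$) from the Demu\v skin classification is correct but superfluous once the associativity computation above is done.
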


\begin{proof}
If $G$ is a discrete group and $p$ is an odd prime then 
the kernel of cup product from $\wedge^2H^1(G;\mathbb{F}_p)$ 
to $H^2(G;\mathbb{F}_p)$ is isomorphic to
$G_{[2]}X^p(G)/G_{[3]}X^p(G)$ \cite{Hi87}, 
and a similar result holds for pro-$p$ groups \cite{Wu}.
Hence the dimension of the image of this cup product is determined 
by the $p$-lower central series.
In particular,
if the pro-$p$ completion of $G$ is a Demu\v skin group
the image of $\wedge^2H^1(G;\mathbb{F}_p)$ in $H^2(G;\mathbb{F}_p)$ is 1-dimensional.
If $G$ is a $PD_3$-group this is impossible, by the non-singularity of Poincar\'e duality.
\end{proof}

This argument can be modified to apply for $p=2$ also.

In higher dimensions  the most convenient candidates for quotients are 
torsion-free nilpotent groups.
A finitely generated nilpotent group $\nu$ of Hirsch length $h$ 
has a maximal finite normal subgroup $T(\nu)$, with quotient a $PD_h$-group.
Moreover, $\nu/T(\nu)$ has  nilpotency class $<h$,
and is residually a finite $p$-group for all $p$,
by Theorem 4 of Chapter 1 of \cite{Se}.
Thus the pro-$p$ completion of $\nu$ is a pro-$p$ Poincar\'e duality group 
for all $p$ prime to the order of $T(\nu)$.

We shall focus on the first undecided case, $n=4$.
If $G_{[k]}/G_{[k+1]}$ is finite, of exponent $e$, say,
then so are all subsequent subquotients of the lower central series,
by Proposition 11 of Chapter 1 of \cite{Se}.
Thus if $G$ is a $PD_4$-group such that $G/G_{[3]}$ has Hirsch length 3
and $G_{[3]}/G_{[4]}$ is finite then, setting $\nu=G/G_{[3]}$,
the canonical projection to $\nu/T(\nu)$ induces isomorphisms on pro-$p$ completions, 
for almost all primes $p$.
Taking products of one such group with copies of $\mathbb{Z}$
would give similar examples with dimension drop 1 in all higher dimensions.

We consider first the case when the quotient $PD_3$-group is abelian.

\begin{theorem}
Let $G$ be a $PD_4$-group.
Then there is an epimorphism from $G$ to $\mathbb{Z}^3$
which induces isomorphisms 
on pro-$p$ completions, for almost all $p$,
if and only if  $\beta_1(G)=3$ and the homomorphism from $\wedge^2H^1(G;\mathbb{Z})$ 
to $H^2(G;\mathbb{Z})$ induced by cup product is injective.

If these conditions hold then $\chi(G)\geq2$ and $G(\mathbb{Z})$ is not $FP_2$.
\end{theorem}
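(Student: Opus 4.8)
The plan is to analyze the two implications separately, using the relationship between pro-$p$ completions and the lower central series that was already invoked in the proof of Theorem 6.

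For the "only if" direction, suppose $\varphi\colon G\to\mathbb{Z}^3$ induces isomorphisms on pro-$p$ completions for almost all $p$. The abelianization of $\mathbb{Z}^3$ is $\mathbb{Z}^3$ itself, and since $\mathbb{Z}^3$ is its own pro-$p$ completion-data in each degree, comparing $H^1(-;\mathbb{F}_p)$ shows $\beta_1(G)=3$ for almost all $p$, hence $\beta_1(G)=3$. Next, by the result of \cite{Hi87} (and its pro-$p$ analogue in \cite{Wu}) quoted in the proof of Theorem 5, the kernel of the cup product $\wedge^2H^1(G;\mathbb{F}_p)\to H^2(G;\mathbb{F}_p)$ is $G_{[2]}X^p(G)/G_{[3]}X^p(G)$; for almost all $p$ this subquotient agrees with the corresponding one for $\mathbb{Z}^3$, which is trivial since $(\mathbb{Z}^3)_{[2]}=1$. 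So the mod-$p$ cup product is injective for almost all $p$, and since $\wedge^2H^1(G;\mathbb{Z})\cong\mathbb{Z}^3$ is torsion-free, injectivity mod $p$ for infinitely many $p$ forces injectivity of the integral cup product $\wedge^2H^1(G;\mathbb{Z})\to H^2(G;\mathbb{Z})$.

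For the "if" direction, assume $\beta_1(G)=3$ and $\wedge^2H^1(G;\mathbb{Z})\hookrightarrow H^2(G;\mathbb{Z})$. Choose $\varphi\colon G\to\mathbb{Z}^3$ inducing an isomorphism $G/G(\mathbb{Z})\cong\mathbb{Z}^3$. To see $\varphi$ induces an iso on pro-$p$ completions for almost all $p$, it suffices (by the standard criterion, e.g. via comparing $\mathbb{F}_p$-lower central series, or via Stallings' theorem and five-lemma arguments mod $p$) to check that $\varphi$ induces an isomorphism $H^1(\mathbb{Z}^3;\mathbb{F}_p)\to H^1(G;\mathbb{F}_p)$ and an injection $H^2(\mathbb{Z}^3;\mathbb{F}_p)\to H^2(G;\mathbb{F}_p)$. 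The first is clear from $\beta_1(G)=3$ for $p$ not dividing the torsion of $H_1(G;\mathbb{Z})$. For the second: $H^2(\mathbb{Z}^3;\mathbb{F}_p)=\wedge^2H^1(\mathbb{Z}^3;\mathbb{F}_p)$, and the composite with $H^2(G;\mathbb{F}_p)$ is the mod-$p$ cup product on $G$, which is injective for almost all $p$ because the integral cup product is injective with torsion-free source (the cokernel $H^2(G;\mathbb{Z})/\wedge^2H^1(G;\mathbb{Z})$ has only finitely many torsion primes). This gives the equivalence. I expect the mild point here to be pinning down exactly which finite set of primes must be excluded and citing the precise form of the pro-$p$ isomorphism criterion; the cohomological content is routine once the cup-product statement from \cite{Hi87}, \cite{Wu} is in hand.

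For the final assertions, suppose the conditions hold. Since $\wedge^2H^1(G;\mathbb{Z})\cong\mathbb{Z}^3$ injects into $H^2(G;\mathbb{Z})$, we get $\beta_2(G)\geq3$, whence $\chi(G)=2-2\beta_1(G)+\beta_2(G)=2-6+\beta_2(G)\geq-1$; to reach $\chi(G)\geq2$ one pushes further using Poincar\'e duality. By duality the cup product $H^1(G;\mathbb{Q})\otimes H^3(G;\mathbb{Q})\to H^4(G;\mathbb{Q})\cong\mathbb{Q}$ is a perfect pairing, so $\beta_3(G)=\beta_1(G)=3$; then $\chi(G)=2-3+\beta_2(G)-3+1=\beta_2(G)-3$, so $\chi(G)\geq2$ is equivalent to $\beta_2(G)\geq5$. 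This last inequality should follow by combining the injection $\wedge^2H^1(G;\mathbb{Z})\hookrightarrow H^2(G;\mathbb{Z})$ with its Poincar\'e dual: cup product with a generator-triple gives an injection $H^1(G;\mathbb{Q})\to H^3(G;\mathbb{Q})$ dual data forcing extra classes in $H^2$; more precisely the image of $\wedge^2H^1$ and its annihilator under the duality pairing on $H^2$ together span a subspace of dimension $\geq 2\cdot 3 - \dim(\text{overlap})$, and a Massey-product / bilinear-form argument bounds the overlap, yielding $\beta_2(G)\geq5$. This is the step I expect to be the main obstacle: squeezing the Euler-characteristic bound out of the injectivity hypothesis via Poincar\'e duality, rather than merely getting $\chi(G)\geq -1$. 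Finally, that $G(\mathbb{Z})=\ker\varphi$ is not $FP_2$: if it were, then since $G/G(\mathbb{Z})\cong\mathbb{Z}^3$ has cohomological dimension $3$ and $G$ is a $PD_4$-group, a spectral sequence / Strebel-type argument would force $G(\mathbb{Z})$ to be a $PD_1$-group, i.e. $G(\mathbb{Z})\cong\mathbb{Z}$ and $G$ virtually $\mathbb{Z}^4$; but then $\chi(G)=0$, contradicting $\chi(G)\geq2$. Hence $G(\mathbb{Z})$ is not $FP_2$.
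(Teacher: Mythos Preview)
Your handling of the equivalence and of the $FP_2$ assertion is essentially correct and close in spirit to the paper, though you take a slightly different path for the ``if'' direction: you appeal to a Stallings-type criterion (isomorphism on $H^1$ and injection on $H^2$ with $\mathbb{F}_p$ coefficients), whereas the paper uses directly the result of \cite{Hi87} that the integral cup product $\wedge^2H^1(G;\mathbb{Z})\to H^2(G;\mathbb{Z})$ is injective if and only if $G_{[2]}/G_{[3]}$ is finite, and then argues via the lower central series that $G(\mathbb{Z})/G_{[3]}$ is the torsion subgroup of $G/G_{[3]}$.  Both routes are valid.

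The genuine gap is in your argument for $\chi(G)\geq2$.  First, there is an arithmetic slip: your second computation should read $\chi(G)=1-3+\beta_2(G)-3+1=\beta_2(G)-4$ (you had it right the first time), so the target is $\beta_2(G)\geq6$, not $5$.  More importantly, you have not isolated the mechanism that delivers this bound, and Massey products are a red herring.  The observation you are missing is that the rank-$3$ image $L$ of $\wedge^2H^1(G;\mathbb{Q})$ in $H^2(G;\mathbb{Q})$ is \emph{self-annihilating} (isotropic) for the cup-product form $H^2\times H^2\to H^4\cong\mathbb{Q}$: any product $\alpha\smile\beta$ with $\alpha,\beta\in L$ is a four-fold cup of classes from the $3$-dimensional space $H^1(G;\mathbb{Q})$, hence lies in the image of $\wedge^4\mathbb{Q}^3=0$.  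Non-degeneracy of Poincar\'e duality then gives $\dim L+\dim L^{\perp}=\beta_2(G)$ with $L\subseteq L^{\perp}$, so $\beta_2(G)\geq 2\dim L=6$ and $\chi(G)\geq2$.  Your ``overlap'' heuristic was gesturing at $L\cap L^{\perp}$, but the decisive fact is that this intersection is all of $L$.
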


\begin{proof}
The homomorphism from $\wedge^2H^1(G;\mathbb{Z})$ to 
$H^2(G;\mathbb{Z})$ induced by cup product is a monomorphism if and only if 
$G_{[2]}/G_{[3]}$ is finite \cite{Hi87}.

Suppose that there is such an epimorphism.
Since $G/G'$ and $G_{[2]}/G_{[3]}$ are finitely generated, it follows
easily that $\beta_1(G)=3$ and $G_{[2]}/G_{[3]}$ is finite.
Thus the conditions in the first assertion are necessary.

If they hold then $G_{[2]}/G_{[3]}$ is finite, 
and the rational lower central series for $G$ terminates at $G(\mathbb{Z})$ \cite{Hi87}.
Therefore $G(\mathbb{Z})/G_{[3]}$ is the torsion subgroup of $G/G_{[3]}$.
Thus if $p$ is prime to the order of $G(\mathbb{Z})/G_{[3]}$ then
the canonical epimorphism from $G$ to $G/G(\mathbb{Z})\cong\mathbb{Z}^3$ 
induces an isomorphism of pro-$p$ completions.

The image of cup product from $\wedge^2H^1(G;\mathbb{Z})$ 
to $H^2(G;\mathbb{Z})$  has rank 3, and must be self-annihilating, 
since $\wedge^4(\mathbb{Z}^3)=0$.
Hence $\beta_2(G)\geq6$, by the non-singularity of Poincar\'e duality,
and so $\chi(G)\geq2$.

If $G(\mathbb{Z})$ were $FP_2$ then it would be a $PD_1$-group,
and so $FP$, by Theorem 1.19 of \cite{Hi}.
But then $\chi(G)=0$, since $\chi$ is multiplicative in exact sequences of groups of type $FP$.
\end{proof}

The conditions in this theorem are detected by de Rham cohomology,
which suggests that we should perhaps seek examples among
the fundamental groups of smooth manifolds with metrics of negative curvature.
Are there any such groups? 
Since $\chi(G)\not=0$, no such group is solvable
or a semidirect product $H\rtimes\mathbb{Z}$ with $H$ of type $FP$.

There are parallel criteria in the nilpotent case.

\begin{theorem}
Let $G$ be a $PD_4$-group.
Then there is an epimorphism from $G$ to  a nonabelian nilpotent $PD_3$-group
which induces isomorphisms on pro-$p$ completions, for almost all $p$,
if and only if  $\beta_1(G)=2$,
cup product from $\wedge^2H^1(G;\mathbb{Z})$ to $H^2(G;\mathbb{Z})$ is $0$,
and  $G_{[3]}/G_{[4]}$ is finite.

If these conditions hold then $\chi(G)\geq0$.
\end{theorem}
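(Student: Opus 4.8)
The plan is to follow the pattern of the previous theorem, the candidate quotient being $\nu:=(G/G_{[3]})/T$, where $T$ denotes the (finite) torsion subgroup of the finitely generated nilpotent group $G/G_{[3]}$. The ingredients are: a nonabelian nilpotent $PD_3$-group is a lattice in the Heisenberg Lie group, hence torsion-free of class $2$ with $\beta_1=2$, infinite cyclic central commutator subgroup, and trivial third lower central term, and its second cohomology has free rank $2$ with cyclic torsion subgroup; the cup-product criteria of \cite{Hi87}, namely that $\mathrm{cup}_G\colon\wedge^2H^1(G;\mathbb{Z})\to H^2(G;\mathbb{Z})$ is injective iff $G_{[2]}/G_{[3]}$ is finite, that the rank of its image is $\binom{\beta_1(G)}{2}-\mathrm{rank}(G_{[2]}/G_{[3]})$, and the mod-$p$ version identifying the kernel with $G_{[2]}X^p(G)/G_{[3]}X^p(G)$; and, from the discussion above, that if $G/G_{[3]}$ has Hirsch length $3$ and $G_{[3]}/G_{[4]}$ is finite then $G\to\nu$ induces isomorphisms on pro-$p$ completions for almost all $p$.

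The core of the argument is a description of the image of $\mathrm{cup}_G$. Suppose $\beta_1(G)=2$ and $\mathrm{rank}(G_{[2]}/G_{[3]})=1$. Then $G/G_{[3]}$ has Hirsch length $3$, so $\nu$ is a nonabelian nilpotent $PD_3$-group, and for the canonical epimorphism $f\colon G\to\nu$ the map $G/G'\to\nu/\nu'$ is surjective with torsion kernel. Consequently $\wedge^2f^*$ is an isomorphism, so by naturality of cup product $\mathrm{im}(\mathrm{cup}_G)=f^*(\mathrm{im}(\mathrm{cup}_\nu))$; and the universal coefficient theorem shows $f^*$ is injective on the torsion subgroup of $H^2(\nu;\mathbb{Z})$, which is exactly $\mathrm{im}(\mathrm{cup}_\nu)$. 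Hence $\mathrm{im}(\mathrm{cup}_G)$ is isomorphic to the torsion of $H^2(\nu;\mathbb{Z})$, so $\mathrm{cup}_G=0$ precisely when $\nu$ is the integral Heisenberg group. The ``if'' direction follows: $\beta_1(G)=2$ makes $\wedge^2H^1(G;\mathbb{Z})$ infinite cyclic, so $\mathrm{cup}_G=0$ yields both $\mathrm{rank}(G_{[2]}/G_{[3]})=1$ and $\nu$ integral Heisenberg, and with $G_{[3]}/G_{[4]}$ finite the pro-$p$ completion statement recalled above makes $f$ a pro-$p$-isomorphism for almost all $p$.

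For the ``only if'' direction, suppose $f\colon G\to\nu$ is an epimorphism onto a nonabelian nilpotent $PD_3$-group inducing isomorphisms on pro-$p$ completions for almost all $p$. The homological criterion for such maps ($f^*$ an isomorphism on $H^1(-;\mathbb{F}_p)$ and a monomorphism on $H^2(-;\mathbb{F}_p)$, for almost all $p$) gives $\beta_1(G)=\beta_1(\nu)=2$. The mod-$p$ cup-product formula, together with the fact that $\mathrm{cup}_\nu$ has torsion image and so vanishes modulo $p$ for almost all $p$, shows $\mathrm{cup}_G$ vanishes modulo $p$ for almost all $p$, whence $\mathrm{rank}(G_{[2]}/G_{[3]})=1$; and a parallel comparison of lower $p$-central series, using that $\nu$ has class $2$, forces the finitely generated group $G_{[3]}/G_{[4]}$ to be finite. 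What remains — and what I expect to be the main obstacle — is to upgrade the mod-$p$ (equivalently rational) vanishing of $\mathrm{cup}_G$ to integral vanishing. By the second paragraph $\nu\cong(G/G_{[3]})/T$, so this is the assertion that a $PD_4$-group with $\beta_1=2$, $\mathrm{rank}(G_{[2]}/G_{[3]})=1$ and $G_{[3]}/G_{[4]}$ finite cannot have torsion in $H^2((G/G_{[3]})/T;\mathbb{Z})$. This is false for general groups (for instance for lattices in the Heisenberg group with torsion in their abelianization), so the argument must genuinely use Poincar\'e duality for $G$; I would try to exploit the nonsingular linking pairing on the torsion of $H^*(G;\mathbb{Z})$ against Hillman's identification of the cup-product image.

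Finally, for $\chi(G)\geq0$: when the conditions hold, the ``if'' direction gives $f\colon G\to\nu$ with $f^*$ injective on $H^2(-;\mathbb{F}_p)$ for almost all $p$, and since $H^2(\nu;\mathbb{Z})$ has free rank $2$ we get $\dim_{\mathbb{F}_p}H^2(G;\mathbb{F}_p)\geq2$, hence $\beta_2(G)\geq2$, for almost all $p$; Poincar\'e duality then gives $\chi(G)=2-2\beta_1(G)+\beta_2(G)=\beta_2(G)-2\geq0$.
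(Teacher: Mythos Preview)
Your construction of the quotient $\nu=(G/G_{[3]})/T$ and the argument that $G\to\nu$ induces pro-$p$ isomorphisms for almost all $p$ when $G_{[3]}/G_{[4]}$ is finite is exactly the paper's ``if'' argument (the paper phrases it via the embedding $F(2)/F(2)_{[3]}\hookrightarrow G/G_{[3]}$, but this amounts to the same thing). Your identification of $\nu$ with the integral Heisenberg group when $\mathrm{cup}_G=0$ is sharper than the paper, which only records that $\nu$ is a nonabelian nilpotent $PD_3$-group.

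For the ``only if'' direction the paper says nothing beyond ``the conditions are clearly necessary''. Your analysis goes considerably further: you correctly extract $\beta_1(G)=2$, $\mathrm{rank}(G_{[2]}/G_{[3]})=1$, and finiteness of $G_{[3]}/G_{[4]}$ from the pro-$p$ hypothesis, and you correctly isolate the residual issue, namely passing from rational to integral vanishing of $\mathrm{cup}_G$. Your own second paragraph shows that $\mathrm{im}(\mathrm{cup}_G)$ is isomorphic to the torsion of $H^2(\nu;\mathbb{Z})$, and since any epimorphism onto a nonabelian nilpotent $PD_3$-group $\nu$ inducing pro-$p$ isomorphisms for almost all $p$ must factor through an isomorphism $(G/G_{[3]})/T\cong\nu$, the condition $\mathrm{cup}_G=0$ is equivalent to $\nu\cong\Gamma_1$. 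Nothing in the hypotheses excludes $\nu\cong\Gamma_q$ with $q>1$, and Poincar\'e duality for $G$ does not obviously help. So the obstacle you anticipate is genuine: the paper does not overcome it either, and the statement as written appears to need the weaker condition that $\mathrm{cup}_G$ have torsion image (equivalently, vanish rationally). You should not expect to close this gap.

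For $\chi(G)\geq0$ the paper argues differently, using the five-term exact sequence for $G_{[3]}\lhd G$ to show that $H_2(G;\mathbb{Q})\to H_2(G/G_{[3]};\mathbb{Q})\cong\mathbb{Q}^2$ is onto, whence $\beta_2(G)\geq2$. Your route via injectivity of $f^*$ on $H^2(-;\mathbb{F}_p)$ for almost all $p$ reaches the same inequality; both are valid.
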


\begin{proof}
The conditions are clearly necessary.
Suppose that they hold.
Then $G/G(\mathbb{Z})\cong\mathbb{Z}^2$.
The homomorphism from the free group  $F(2)$ to $G$ determined by 
elements of $G$ representing a basis for this quotient induces a monomorphism 
from $F(2)/F(2)_{[3]}$ to $G/G_{[3]}$ with image of finite index,
by the cup-product condition \cite{Hi87}. 
Thus $G(\mathbb{Z})/G_{[3]}$ is nilpotent and virtually $\mathbb{Z}$.
Let $T$ be the preimage in $G$ of the torsion subgroup of $G(\mathbb{Z})/G_{[3]}$.
This is characteristic in $G$, and  $G/T$ is a non-abelian extension 
of $\mathbb{Z}^2$ by $\mathbb{Z}$. 
Hence it is a nilpotent $PD_3$-group.
 If $G_{[3]}/G_{[4]}$ is finite, then 
the quotient epimorphism to $G/T$ induces isomorphisms
on pro-$p$ completions, for almost all $p$.

If these conditions hold then the natural map from $H_2(G;\mathbb{Q})$ to 
$H_2(G/G_{[3]};\mathbb{Q})\cong\mathbb{Q}^2$ is an epimorphism,
by the 5-term exact sequence of low degree for the homology of $G$ as an
extension of $G/G_{[3]}$ by $G_{[3]}$.
Hence $\beta_2(G)\geq2$, and so $\chi(G)\geq0$.
\end{proof}

Are there any such groups?
Theorem 1.19 of \cite{Hi} again implies that $T$ cannot be $FP_2$.
If $\chi(G)=0$ then $T$ cannot even be finitely generated, 
by Corollary 6.1 of \cite{HK07} (used twice).
For otherwise $T$ would be $\mathbb{Z}$,
so $G$ would be nilpotent, and $G_{[3]}/G_{[4]}$ would be infinite.
(If $\Gamma$ is a lattice in the nilpotent Lie group $Nil^4$ 
then the first two conditions of Theorem 6 hold, and $\chi(\Gamma)=0$, 
but $\Gamma_{[3]}\cong\mathbb{Z}$ and $\Gamma_{[4]}=1$.)
Is there such a group with $T$ free of infinite rank?

The possibility of no dimension drop ($n=r$) is realized by the $n$-torus $(S^1)^n$, for any $n$.
Are there any examples in which the dimension increases on pro-$p$ completion,
i.e., with $n<r$?
It again follows from Lemma 4 that if $\pi/\pi'\cong\mathbb{Z}^r$ 
and $\pi'$ is perfect then $n\geq4$.
Moreover,  if there is such a $PD_4$-group $G$ then $\chi(G)\geq2$,
by the non-singularity of Poincar\'e duality.

\medskip
{\it Acknowledgment.}
We would like to thank Bruno Martelli for suggesting 
the use of manifolds such as the manifold $N$ of Theorem 1.

\newpage

\end{document}